\newtheorem{thm}{Theorem}[section]
\newtheorem{lem}[thm]{Lemma}
\newtheorem{cor}[thm]{Corollary}
\theoremstyle{definition}
\newtheorem{definition}[thm]{Definition}
\newtheorem{rem}[thm]{Remark}
\numberwithin{equation}{section}
\renewcommand{\L}{\mathcal{L}}
\newcommand{\T}{\mathcal{T}}
\newcommand{\V}{\mathcal{V}}
\newcommand{\N}{\mathcal{N}}
\newcommand{\B}{\mathcal{B}}
\newcommand{\qq}{\mathbb{Q}}
\newcommand{\C}{\mathcal{C}}
\newcommand{\M}{\mathcal{M}}
\newcommand{\pp}{\mathbb{P}}
\renewcommand{\H}{\mathcal{H}}
\newcommand{\F}{\mathcal{F}}
\renewcommand{\P}{\mathcal{P}}
\newcommand{\E}{\mathcal{E}}
\renewcommand{\O}{\mathcal{O}}
\newcommand{\Mg}{\mathcal{M}_g}
\DeclareMathOperator{\rank}{rank}
\DeclareMathOperator{\ch}{ch}
\DeclareMathOperator{\td}{td}
\DeclareMathOperator{\Pic}{Pic}
\DeclareMathOperator{\coker}{coker}
\DeclareMathOperator{\Sym}{Sym}
\DeclareMathOperator{\BSL}{BSL}
\newcommand{\Mb}{\overline{\mathcal{M}}}
\begin{document}
\title{The bielliptic locus in genus $11$}

\author{Samir Canning and Hannah Larson}
\thanks{During the preparation of this article, S.C. was partially supported by NSF RTG grant DMS-1502651. H.L. was partially supported by the Hertz Foundation and NSF GRFP under grant DGE-1656518. This research was partially conducted during the period H.L served as a Clay Research Fellow.}
\email{scanning@math.ethz.ch}
\email{hlarson@math.harvard.edu}
\maketitle

\begin{abstract}
The Chow ring of $\M_g$ is known to be generated by tautological classes for $g \leq 9$.
Meanwhile, the first example of a non-tautological class on $\M_{g}$ is the fundamental class of the bielliptic locus in $\M_{12}$, due to van Zelm.
It remains open if the Chow rings of $\M_{10}$ and $\M_{11}$ are generated by tautological classes.
In these cases, a natural first place to look is at the bielliptic locus.
In genus $10$, it is already known that classes supported on the bielliptic locus are tautological.
Here, we
prove that all classes supported on the bielliptic locus are tautological in genus $11$. By Looijenga's vanishing theorem, this implies that they all vanish.
\end{abstract}


\section{Introduction}
The Chow rings of the moduli spaces of smooth and stable curves, possibly with marked points, have been central objects of study in algebraic geometry since Mumford's seminal paper \cite{Mumford}. A complete description of the Chow ring of the moduli space $\Mg$ of smooth curves of genus $g$ has been obtained for $g \leq 9$ in a series of papers over the past 40 years:
\begin{itemize}
    \item Genus $2$ by Mumford in 1983 \cite{Mumford},
    \item Genus $3$ and $4$ by Faber in 1990 \cite{FaberI,FaberII},
    \item Genus $5$ by Izadi in 1995 \cite{Izadi},
   \item Genus $6$ by Penev and Vakil in 2015 \cite{PenevVakil},
    \item Genus $7, 8, 9$ by the authors in 2021 \cite{789}.
\end{itemize}
In each of these cases, the Chow ring is shown to be equal to the \emph{tautological ring}.
If $f: \C \to \M_g$ denotes the universal curve, then the tautological subring $R^*(\M_g) \subseteq A^*(\M_g)$ is defined to be the subring generated by the classes $\kappa_i := f_*(c_1(\omega_f)^{i+1})$.

Even more recently, there has been significant progress in the study of the Chow rings of the moduli spaces $\Mb_{g,n}$ of stable curves of genus $g$ with $n$ marked points \cite{CL-CKgP,CanningLarsonPayne}. The Chow ring $A^*(\Mb_{g,n})$ and its tautological subring $R^*(\Mb_{g,n})$ are of fundamental importance in a wide variety of fields, including enumerative geometry and physics \cite{Kontsevich, Witten}. The first step in the study of $A^*(\Mb_{g,n})$ is to compute $A^*(\M_{g,n})$. Therefore, $A^*(\M_{g,n})$ is an important object of study both in its own right and for its applications to computing $A^*(\Mb_{g,n})$.

The tautological ring $R^*(\M_g)$ is highly structured. By a theorem of Looijenga, $R^i(\M_g)$ vanishes when $i>g-2$ and by combining results of Faber and Looijenga, $R^{g-2}(\M_g)=\qq$, generated by the class of the hyperelliptic locus \cite{Looijenga,Fabernonvanishing}.
Looijenga's vanishing theorem can be used to obtain interesting vanishing results for classes of codimension at least $g-1$ by demonstrating that such a class is tautological.

In general, however, tautological classes do not generate the entire Chow ring. In fact, van Zelm  \cite{VZ} has produced an explicit algebraic cycle on $\M_{12}$ which is \emph{not} tautological. 
A curve $C$ is called \emph{bielliptic} if it admits a degree $2$ map $C \to E$ for $E$ an elliptic curve. 
Let $\B_g \subset \M_g$ denote the locus of bielliptic curves. A dimension count using the Riemann--Hurwitz formula shows that $\B_g \subset \M_g$ has codimension $g-1$. 
In \cite{VZ},
van Zelm proves that $[\B_{12}]$ is \emph{not} tautological. Furthermore, for all $g \geq 12$, van Zelm shows the closure of the bielliptic locus $\overline{\B}_{g} \subset \Mb_g$ has non-tautological fundamental class.

In light of van Zelm's results, it remains an open problem of particular interest whether the Chow rings of $\M_{10}$ and $\M_{11}$ are generated by tautological classes. To approach this problem, one might try stratifying the moduli space by gonality, as we did in \cite{789}. However, difficulties arise both with large and small gonality. In large gonality, we do not have good control over the Hurwitz space parametrizing covers of degree at least $6$, which are general in $\Mg$ for $g\geq 9$. In small gonality, we do not know if certain fundamental classes in a further stratification are tautological when $g\geq 10$. See Section \ref{sts} for a discussion of this issue.

A natural first place to check for non-tautological classes is the bielliptic locus, following van Zelm. In \cite{789}, we proved that $[\B_{10}]$ is tautological. The argument there relies on a special coincidence of small numbers that holds only in genus $10$, allowing us to bypass the aforementioned issues with fundamental classes in small gonality. Here, we settle the remaining case of genus $11$, using a new technique.

\begin{thm} \label{main}
The fundamental class of the bielliptic locus $[\B_{11}]$ is tautological. As a consequence, all classes supported on $\B_{11} \subset \M_{11}$ are tautological.
\end{thm}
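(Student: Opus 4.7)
The plan is to realize $[\B_{11}]$ as the pushforward of the fundamental class of a Hurwitz space and to show that this pushforward lies in the tautological ring. Let $\H$ denote the Hurwitz stack parametrizing double covers $\pi\colon C \to E$ with $g(C) = 11$ and $g(E) = 1$, and let $\nu\colon \H \to \M_{11}$ be the source map. Because the bielliptic involution on a smooth curve of genus at least $5$ is unique, $\nu$ is generically finite of some degree $d$ onto $\B_{11}$, with $\nu_*[\H] = d \cdot [\B_{11}]$; hence it suffices to prove $\nu_*[\H] \in R^*(\M_{11})$.

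To access this pushforward I would exploit the target map $t\colon \H \to \M_{1,1}$ sending $(C \to E)$ to $E$. Denoting by $p\colon \E \to \M_{1,1}$ the universal elliptic curve with zero section $\sigma$, and by $L = \O_\E(10\sigma)$, every bielliptic cover is determined, up to a finite ambiguity coming from translations of $E$ and the choice of a square root of the branch divisor, by a divisor $B \in |L^{\otimes 2}|$. Thus $\H$ is a finite cover of an open substack of the projective bundle $\pp_{\M_{1,1}}(p_* L^{\otimes 2})$, and $A^*(\H)$ is generated by the Hodge class $\lambda$ pulled back from $\M_{1,1}$ together with the hyperplane class $\zeta$ of the projective bundle. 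Moreover, the universal bielliptic curve $\mathcal{C} \to \H$ realizes as a divisor of degree $2$ over $\E$ inside the $\pp^1$-bundle $P = \pp_\E(\O \oplus L)$, so that $\omega_{\mathcal{C}/\H}$ and each pulled-back tautological class $\nu^* \kappa_i$ become explicit polynomials in $\lambda$, $\zeta$, and the fiber class of $\E/\M_{1,1}$.

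The main obstacle is then to show $\nu_*[\H] \in R^{10}(\M_{11})$. By Looijenga's vanishing, $R^{10}(\M_{11}) = 0$, so this is equivalent to the vanishing of every intersection number $\int_{\M_{11}} \nu_*[\H] \cdot \alpha = \int_\H \nu^*\alpha$ against tautological monomials $\alpha$ of codimension $20$; each such number is in principle computable from the formulas above. The difficulty is twofold: first, one must extend $\nu$ to a proper Deligne--Mumford stack --- for instance via admissible bielliptic covers over $\Mb_{1,1}$ --- while controlling the boundary contributions; second, one must reduce the bookkeeping for the large set of codimension-$20$ tautological monomials to a manageable computation. The new technique alluded to in the introduction presumably furnishes exactly such a reduction, plausibly by comparing $[\B_{11}]$ with the pushforward of a simpler class from an intermediate stratum (such as the tetragonal locus containing $\B_{11}$, whose tautological behavior the authors can control via their earlier work on degree $\leq 5$ Hurwitz spaces), and this is precisely the step where genus $11$ must diverge from the non-tautological behavior that begins in genus $12$.
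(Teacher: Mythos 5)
Your strategy is genuinely different from the paper's, and it has a fatal logical gap at its central step. You propose to prove $\nu_*[\H]\in R^{10}(\M_{11})=0$ by checking that the intersection numbers $\int_{\M_{11}}\nu_*[\H]\cdot\alpha$ vanish for all tautological monomials $\alpha$ of complementary codimension. That implication goes the wrong way: a class that pairs to zero with every tautological class need not be tautological (and need not be zero). Pairing against tautological classes is an \emph{obstruction} to being tautological --- this is exactly how van Zelm proves $[\overline{\B}_{12}]$ is \emph{not} tautological --- but the vanishing of all such pairings proves nothing. Worse, on $\M_{11}$ your criterion is literally vacuous: Looijenga's theorem also gives $R^{20}(\M_{11})=0$, so every tautological monomial $\alpha$ of codimension $20$ is already zero and all the proposed intersection numbers vanish for trivial reasons, independently of whether $[\B_{11}]$ is tautological. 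Passing to admissible covers over $\Mb_{11}$ does not repair this; there the computation becomes a genuine consistency check, but still only an obstruction. Two further issues: generation of $A^*(\H)$ by $\lambda$ and $\zeta$ does not follow merely from $\H$ being a finite cover of an open substack of a projective bundle (with $\qq$-coefficients a finite cover can have strictly larger Chow ring), and the second assertion of the theorem --- that \emph{all} classes supported on $\B_{11}$ are tautological, not just the fundamental class --- is not addressed.

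For comparison, the paper never works with the double-cover Hurwitz space directly. It composes the bielliptic map with a degree $2$ map $E\to\pp^1$ to view bielliptic curves inside the degree $4$ Hurwitz space $\H_{4,11}$, where (away from $\beta^{-1}(\M_{11}^3)$) the locus $\beta^{-1}(\B_{11})$ is the degeneracy locus of an explicit map of vector bundles built from the Casnati--Ekedahl bundle $\F$, of expected codimension $5$ but actual codimension $4$. The excess Porteous formula then produces a divisor class $t$ on $\beta^{-1}(\B_{11})$ whose pushforward to $\M_{11}\smallsetminus\M_{11}^3$ is manifestly tautological, and an explicit degree computation shows $t$ restricts to degree $-15\neq 0$ on each fiber $\Pic^2E$ of $\beta^{-1}(\B_{11})\to\B_{11}$, so $\beta'_*\iota_*t=-15[\B_{11}]$. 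The ``all classes supported on $\B_{11}$'' statement is then obtained by pushing forward from the finite surjective stratum of special bielliptic covers, whose Chow ring is generated by pulled-back $\kappa$ classes. The essential idea your proposal lacks is that one must \emph{produce} $[\B_{11}]$ as a nonzero multiple of the image of something already known to be tautological, rather than test it against tautological classes.
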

\noindent Combining this result with Looijenga's vanishing theorem \cite{Looijenga}, we see that all classes supported on $\B_{11} \subset \M_{11}$ are in fact zero for codimension reasons.

Theorem \ref{main} provides the first positive evidence that Chow ring of $\M_{11}$ may be generated by tautological classes.
The proof demonstrates an important concept that we hope may find other future applications: even when descriptions of a space as a degeneracy locus fail to occur in the correct codimension, excess intersection formulas may still provide enough information to deduce a desired class is tautological.

\subsection{Overview of the proof}\label{overview}
Our proof will utilize the Hurwitz space $\H_{4,g}$, parametrizing degree $4$, genus $g$ covers of $\pp^1$ up to automorphisms of the target. We write $\beta: \H_{4,g} \to \M_g$ for the forgetful map. The basic idea is to find a tautological class on $\H_{4,11}$ that pushes forward along $\beta$ to a \emph{non-zero} multiple of $[\B_{11}]$. A little care is needed to make this precise because the map $\beta$ is not proper, since our degree $4$ map could acquire a base point. However, $\beta$ becomes proper after throwing out curves of lower gonality from the source and target.

More precisely, let $\M_g^3 \subset \M_g$ denote the locus of curves of gonality at most $3$. Then
\[\beta': \H_{4,g} \smallsetminus \beta^{-1}(\M_g^3) \to \M_g \smallsetminus \M_g^3\]
is proper, and $\beta'_*$ sends tautological classes on $\H_{4,g} \smallsetminus \beta^{-1}(\M_g^3)$ to tautological classes on $\M_g \smallsetminus \M_g^3$ by \cite[Theorem 1.7]{part2}. Here, the tautological subring of an open subset of $\H_{4,g}$ or $\M_g$ is defined to be the image of the tautological ring under restriction. Fortunately, it is already known that all classes supported on $\M_g^3$ are tautological \cite[Section 1.1(1)]{789}. Thus, by excision, it will suffice to show that the image of $[\B_{11}]$ in $\M_{11} \smallsetminus \M_{11}^3$ is tautological.

\begin{figure}
    \centering
    \includegraphics[width=6.3in]{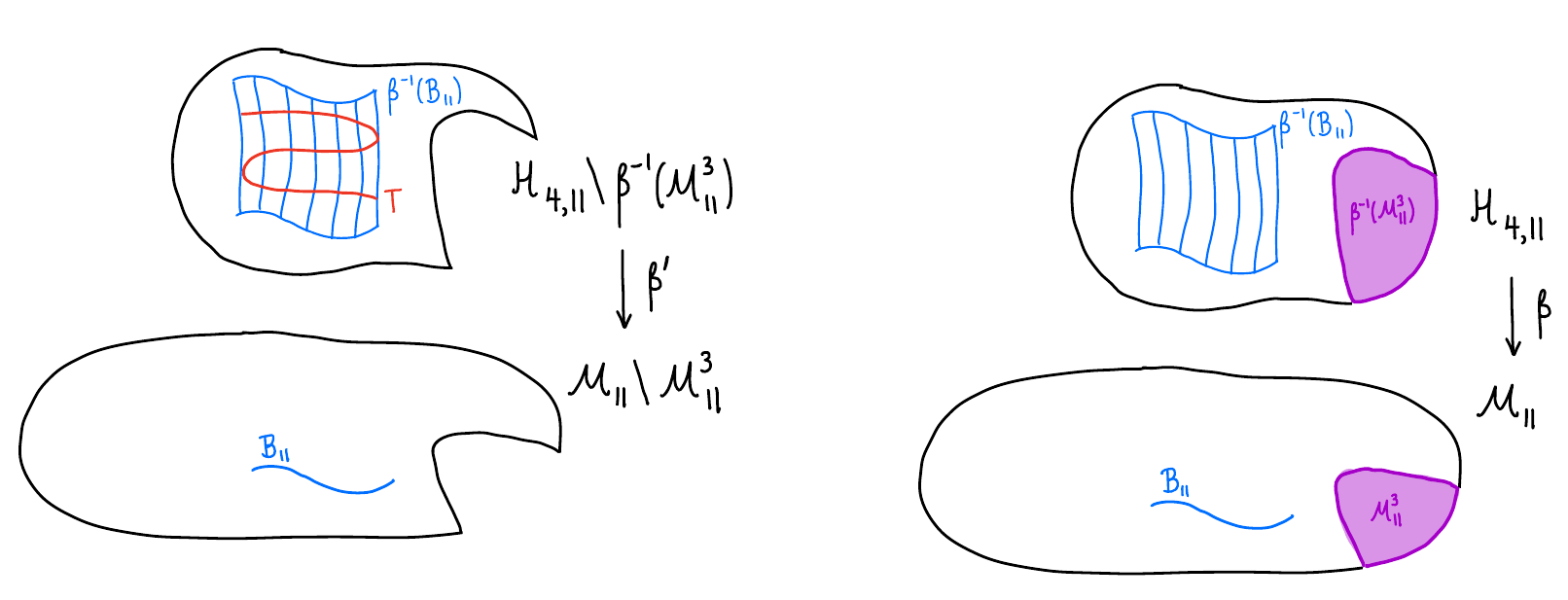}
    \caption{Modulo classes supported on $\M_{11}^3$, we realize $[\B_{11}]$ as a non-zero multiple of $\beta'_*[T]$, which is tautological.}
    \label{sketch}
\end{figure}

We shall do this by finding a tautological class $T \in 
A^*(\H_{4,11} \smallsetminus \beta^{-1}(\M_{11}^3))$ whose push forward along $\beta'$ is a non-zero multiple of $[\B_{11}] \in A^*(\M_{11} \smallsetminus \M_{11}^3)$ (see Figure \ref{sketch}).
Our class $T$ will be supported on
the preimage $\iota: \beta^{-1}(\B_{11}) \subset \H_{4,11}$, that is $T = \iota_*t$ for some $t \in A^*(\beta^{-1}(\B_{11}))$.
For $g \geq 6$, any degree $2$ map from a genus $g$ curve to an elliptic curve $C \to E$ is unique by the Castelnuovo--Severi inequality. 
Also by the Castelnuovo--Severi inequality, the preimage $\beta^{-1}(\B_{g})$ consists of degree $4$ covers of the form $C \to E \to \pp^1$ (the unique degree $2$ cover $C \to E$, followed by any degree $2$ cover $E \to \pp^1$). The fibers $\beta^{-1}(\B_g) \to \B_g$ are therefore $1$-dimensional (identified with $\Pic^2 E$). Hence, our $t$ should have codimension $1$ in $A^*(\beta^{-1}(\B_{11}))$, and consequently $T = \iota_* t$ has codimension $5$ inside $A^*(\H_{4,11} \smallsetminus \beta^{-1}(\M_{11}^3))$.

In genus $10$, we found that $\beta^{-1}(\B_{10})$
may be interpreted as a degeneracy locus occurring in the ``expected codimension." We used this to show that the fundamental class of $\beta^{-1}(\B_{10})$ is tautological in $A^*(\H_{4,10} \smallsetminus \beta^{-1}(\M_{10}^3))$. We also proved that $A^*(\beta^{-1}(\B_{10}))$ is generated by restrictions of tautological classes on $\H_{4,10}$. Pushing forward, it follows that all classes supported on $\B_{10}$ are tautological.

In genus $11$, however, the analogous construction fails: 
descriptions of $\beta^{-1}(\B_{11}) \subset \H_{4,11}$ as a degeneracy locus occur in the wrong codimension. However, the \emph{expected class} of an appropriate degeneracy locus on $\H_{4,11}$ yields a tautological
class in $A^5(\H_{4,11} \smallsetminus \beta^{-1}(\M_{11}^3))$, which --- using excess intersection formulas --- is the push forward of
 a divisor $t$ supported on $\beta^{-1}(\B_{11})$.
The key step is then to \emph{show that $t$
 meets each fiber of $\beta^{-1}(\B_{11}) \to \B_{11}$ with non-zero degree}.
Then, even though it remains unknown if $[\beta^{-1}(\B_{11})]$ is tautological on $\H_{4,11}$, we manage to show that a non-zero multiple of $[\B_{11}]$, and hence $[\B_{11}]$ itself, is tautological in $A^*(\M_{11})$.

\subsection{Outline of the paper}
In Section \ref{dl}, we explain how to realize $\beta^{-1}(\B_{11}) \subset \H_{4,11}$ as a degeneracy locus where a map of vector bundles drops rank. The expected codimension of this degeneracy locus is $5$. The excess Porteous formula (discussed in Section \ref{exp}) determines a divisor $t \in A^1(\beta^{-1}(\B_{11}))$ such that $\iota_* t$ is tautological. Then, in Section \ref{degcalc}, we calculate the degree of $t$ along each fiber of $\beta^{-1}(\B_{11}) \to \B_{11}$ and show it is nonzero. This shows that $[\B_{11}]$ is tautological in $A^*(\M_{11})$. In the final Section \ref{fs}, we consider the stratum of ``special bielliptic covers" in $\H_{4,11}$. This stratum maps finitely and surjectively onto $\B_{11}$ and its Chow ring is generated by pullbacks of tautological classes. Thus, using the push-pull formula, we 
conclude that all classes supported on $\B_{11}$ are tautological.

\subsection*{Acknowledgments} We are grateful to our advisors, Elham Izadi and Ravi Vakil, respectively, for the many helpful conversations.

\section{A degeneracy locus on the Hurwitz space} \label{dl}

The Hurwitz space $\H_{4,g}$ parametrizes degree $4$ covers $C \to \pp^1$ up to automorphisms of the target $\pp^1$. It comes equipped with a universal diagram:
\begin{equation} \label{ud}
\begin{tikzcd}
\C \arrow{dr}[swap]{f} \arrow{rr}{\alpha} & & \P \arrow{dl}{\pi} \\
& \H_{4,g}
\end{tikzcd}
\end{equation}
where $\C \to \H_{4,g}$ is the universal curve, $\alpha$ is the universal degree $4$ cover and $\P \to \H_{4,g}$ is the universal $\pp^1$-bundle. Since we are working with rational coefficients, we can assume that $\P = \pp \V^\vee$ is the projectivization of a rank $2$ vector bundle $\V$ on $\H_{4,g}$ with trivial first Chern class (see \cite[Section 2.3]{part2}).

\begin{definition}
The \emph{tautological ring} $R^*(\H_{4,g}) \subseteq A^*(\H_{4,g})$ is the subring generated by classes of the form $f_*(c_1(\omega_f)^i \cdot \alpha^* c_1(\omega_\pi)^j)$.
\end{definition}

Let $\beta: \H_{4,g} \to \M_g$ be the natural map.
The classes $\beta^*\kappa_i$ lie in $R^*(\H_{4,g})$. However, in general, not all tautological classes on $\H_{4,g}$ are pullbacks of tautological classes from $\M_g$. Nevertheless, tautological classes on $\H_{4,g}$ push forward to tautological classes on $\M_g$ in the following sense. Let $\M_g^3$ be the locus of curves of gonality $\leq 3$. 
\begin{thm}[Theorem 1.7 of \cite{part2}] \label{p2th}
The push forward along $\beta': \H_{4,g} \smallsetminus \beta^{-1}(\M_g^3) \to \M_g \smallsetminus \M_g^3$ sends tautological classes to tautological classes.
\end{thm}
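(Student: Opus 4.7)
The plan is to exploit the diagram of universal curves. I would begin with the commutative square
\[\begin{tikzcd}
\C \arrow{r}{\tilde\beta} \arrow{d}{f} & \C_{\M_g} \arrow{d}{f_{\M_g}} \\
\H_{4,g} \arrow{r}{\beta} & \M_g
\end{tikzcd}\]
obtained from the identification $\C \simeq \beta^*\C_{\M_g}$. Restricting each corner to the complement of the locus lying over $\M_g^3$, the Castelnuovo--Severi inequality ensures $\beta'$ is proper (a degree-$4$ pencil cannot acquire a base point without dropping to a lower-degree pencil, which would decrease gonality). Functoriality of proper pushforward together with $\omega_f = \tilde\beta^*\omega_{f_{\M_g}}$ and the projection formula give
\[
\beta'_*\,f_*\bigl(c_1(\omega_f)^i \cdot \alpha^* c_1(\omega_\pi)^j\bigr) \;=\; f_{\M_g,*}\bigl(c_1(\omega_{f_{\M_g}})^i \cdot \tilde\beta_*\alpha^* c_1(\omega_\pi)^j\bigr).
\]
Because $c_1(\omega_{f_{\M_g}}) = \psi$ is tautological on $\C_{\M_g} = \M_{g,1}$ and $f_{\M_g,*}$ sends products of $\psi^i$ with tautological classes on $\M_{g,1}$ to tautological classes on $\M_g$, the problem reduces to showing $\tilde\beta_*\alpha^* c_1(\omega_\pi)^j$ is tautological on the appropriate open of $\M_{g,1}$.

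Next I would rewrite the class more concretely. With $c_1(\V) = 0$, one has $\omega_\pi = \O_\P(-2)$, so $\alpha^*c_1(\omega_\pi) = -2\,c_1(\L)$ where $\L := \alpha^*\O_\P(1)$ is the relative degree-$4$ pencil line bundle on $\C$. The projective bundle relation gives $z^2 \in \pi^*A^*(\H_{4,g})$, and pulling back along $\alpha^*\pi^* = f^*$ reduces the task to showing that $\tilde\beta_*\bigl(c_1(\L)^\varepsilon \cdot f^*\gamma\bigr)$ is tautological on $\M_{g,1}$ for $\varepsilon \in \{0,1\}$ and $\gamma$ any polynomial in $c_2(\V)$.

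The main obstacle is that $c_2(\V)$ is intrinsic to the Hurwitz space and is not visibly pulled back from $\M_g$. I would address this by applying Grothendieck--Riemann--Roch to $f: \C \to \H_{4,g}$ with coefficient sheaves $\L$ and $\L^{\otimes 2}$: using $\V \cong f_*\L$ (up to convention), this writes $c_1(\V)$ and $c_2(\V)$ as polynomial combinations of generators of the form $f_*(c_1(\L)^a c_1(\omega_f)^b)$. Substituting and applying the projection formula once more (with $c_1(\omega_f) = \tilde\beta^*\psi$), every class in question becomes a $\tilde\beta$-pushforward of a monomial in $c_1(\L)$ on $\C$, which is in turn expressible via the universal $g^1_4$-divisor. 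A cleaner parallel route is the Casnati--Ekedahl structure theorem, which presents every degree-$4$ Gorenstein cover as a relative complete intersection of two conics in a $\pp^2$-bundle over $\P$; this yields explicit resolutions of $\alpha_*\O_\C$ and hence explicit formulas for $\V$, after which the pushforward computation becomes routine bookkeeping and produces manifestly tautological output on $\M_g \smallsetminus \M_g^3$.
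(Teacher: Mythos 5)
This statement is quoted from \cite{part2}; the present paper only \emph{cites} it and offers no proof, so there is no internal argument to compare against. Assessing your sketch on its own terms: the opening moves are sound (the Cartesian square of universal curves, $\omega_f = \tilde\beta^*\omega_{f_{\M_g}}$, properness of $\beta'$ after removing $\M_g^3$, projection formula plus proper base change to peel off $\psi^i$), and the reformulation $\alpha^*c_1(\omega_\pi) = -2c_1(\L)$ together with $z^2 = -\pi^*c_2(\V)$ is the right simplification.

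The real gap is that you have only treated a single \emph{generator} $f_*(c_1(\omega_f)^i\cdot\alpha^*c_1(\omega_\pi)^j)$, whereas the theorem concerns the whole subring generated by such classes. Pushforward is not a ring map: for a product of two or more such generators one cannot peel the product apart by projection formula alone. Concretely, once you write the generators as monomials in $\beta^*\kappa_a$, $c_2(\V)$, and $\lambda_i := f_*\bigl(c_1(\omega_f)^i c_1(\L)\bigr)$, the classes $\beta^*\kappa_a$ pull out, but you are left with computing $\beta'_*\bigl(c_2(\V)^m\,\lambda_{i_1}\cdots\lambda_{i_k}\bigr)$ for $k\geq 2$. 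Handling a product of $k$ pushforwards from $\C$ forces you onto the $k$-fold fiber power $\C^{\times_{\H}k}$, and after base change you must show that $\tilde\beta^{(k)}_*\bigl(f^{(k)*}c_2(\V)^m\cdot\prod_l c_1(\L_l)\bigr)$ lands in the tautological ring of $\C^{\times_{\M_g}k}_{\M_g}$ --- a substantially harder statement, entangled with the diagonal classes and the universal $g^1_4$-divisor, which your sketch never confronts. This is precisely the content hiding behind the phrase ``routine bookkeeping,'' and it is not routine; it is the heart of the theorem.

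Two smaller issues: the identification $\V\cong f_*\L$ is asserted ``up to convention'' but needs a short argument (one must check $R^0\pi_*(\E^\vee(1))=0$ on the relevant locus so that $\pi_*\bigl(\alpha_*\O_\C\otimes\O_\P(1)\bigr)=\V$), and the GRR expansion of $c_2(\V)$ produces $\ch(f_!\L)=\ch(\V)-\ch(R^1f_*\L)$, so one must also control $R^1f_*\L$ (the residual series bundle) before extracting $c_2(\V)$; this introduces further non-pulled-back classes. Your parenthetical remark about the Casnati--Ekedahl resolution is a better lead than the naive GRR substitution, but as written the proposal stops exactly where the proof in \cite{part2} has to do serious work.
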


\subsection{The splitting type stratification} \label{sts}
We recall the splitting type stratification of the Hurwitz space, arising from the Casnati--Ekedahl structure theorem \cite{CE,part1}.
The Casnati--Ekedahl structure theorem gives rise to two important vector bundles on $\P$. As in \cite[Section 2.1]{789}, we define
\[\E := \coker(\O_{\P} \rightarrow \alpha_*\O_{\C})^\vee \qquad \text{and} \qquad \F := \ker(\Sym^2 \E \to \alpha_*\omega_{\alpha}^{\otimes 2}).\]
These bundles have ranks $3$ and $2$ respectively, and both are relative degree $g + 3$ on the fibers of $\P \to \H_{4,g}$. 
In \cite{CDC}, Casnati--Del Centina describe $\beta^{-1}(\B_g)$ for $g \geq 10$ as the locus where $\F$
has a summand of degree $4$. Note that, by the Riemann--Hurwitz formula, we have 
\[\dim \H_{4,g} =  2g +3 \qquad \qquad \dim \B_g = 2g-2 \qquad \qquad \dim \beta^{-1}(\B_g) = 2g-1, \]
so
$\beta^{-1}(\B_{g}) \subset \H_{4,g}$ is codimension $4$.

Considering the splitting types of $\E$ and $\F$ on the fibers of $\P \to \H_{4,g}$ gives rise to a stratification of $\H_{4,g}$. Fixing a splitting type for both $\E$ and $\F$, each stratum is irreducible of codimension given by \cite[equation (3.3)]{789}.
In
\cite[equations (4.3)--(4.9)]{789}, we gave several restrictions on the allowed pairs of splitting types.
In genus $11$, using these conditions, we see that there are only three strata where $\F$ has a summand of degree $4$ or less:
\begin{enumerate}
    \item $\E$ splits as $(1, 6, 7)$ and $\F$ splits as $(2, 12)$, occurring in codimension $2$. This is the preimage of the hyperelliptic locus (see \cite[equation (4.5)]{789}). In particular it is contained in $\beta^{-1}(\M_g^3)$.
    \item $\E$ splits as $(2,6,6)$ and $\F$ splits as $(4,10)$, occurring in codimension $4$. Casnati--Del Centina call these the ``general bielliptic covers."
    \item $\E$ splits as $(2, 5, 7)$ and $\F$ splits as $(4, 10)$, occurring in codimension $5$. Casnati--Del Centina call these the ``special bielliptic covers." The special bielliptic covers $C \xrightarrow{\varphi} E \xrightarrow{\sigma} \pp^1$, where $C$ is of genus $11$, are characterized by the property that the branch locus of $\varphi$ is linearly equivalent to $\sigma^*\O_{\pp^1}(5)$ on $E$.
\end{enumerate}

\begin{rem}
Notice that the special bielliptics are a divisor within $\beta^{-1}(\B_{11})$, and this stratum maps finitely onto $\B_{11}$. We are unsure if the fundamental class of stratum (3) is tautological, so it is not our desired class $t$ from Section \ref{overview}. However, once we know that $[\B_{11}]$ is tautological, pushing forward from this stratum will help us to show that all classes supported on $\B_{11}$ are also tautological.
\end{rem}

In summary, $\beta^{-1}(\B_{11})$ is the union of strata (2) and (3) above:
 \[\beta^{-1}(\B_{11}) = \{b \in \H_{4,11}: \F_{\pi^{-1}(b)} \cong \O(4) \oplus \O(10)\}.  \]
Furthermore, the only other stratum where $\F$ acquires a summand of degree $< 4$ is (1) above.
Hence, 
working in the complement of $\beta^{-1}(\M_{11}^3)$, we can describe $\beta^{-1}(\B_{11})$ as the locus where $\F$ has a summand of degree $\leq 4$. This in turn can readily be described as a determinantal degeneracy locus.

\subsection{Description as a degeneracy locus}
From now on, we will work on $\H^\circ := \H_{4,11} \smallsetminus \beta^{-1}(\M_{11}^3)$. Over $\H^\circ$, the restriction of $\F$ to fibers of $\pi$ always has summands of degree at least $4$. Hence, by cohomology and base change, $\pi_* \F(-5)$ and $\pi_* \F(-4)$ are vector bundles on $\H^\circ$.
Now, we may describe $\beta^{-1}(\B_{11}) \subset \H^\circ$ as the locus where the multiplication map
\begin{equation} \label{themap}\mu: \pi_* \F(-5) \otimes \pi_* \O_{\P}(1) \to \pi_*\F(-4) 
\end{equation}
fails to be surjective. Let us write $D = \beta^{-1}(\B_{11}) \subset \H^\circ$ for this degeneracy locus.

Now, 
$\F$ has rank $2$ and relative degree $14$ on the fibers of $\pi$, so $\F(-5)$ has relative degree $4$. On $\pp^1$, the Euler characteristic of a rank $2$, degree $4$ bundle is $6$. 
Similarly, $\F(-4)$ has relative degree $6$, and thus Euler characteristic $8$ on fibers of $\pi$.  Hence, the ranks of the bundles in \eqref{themap} are
\[\rank \pi_* \F(-5) \otimes \pi_*\O_{\P}(1) = 6 \cdot 2 = 12 \qquad \text{and} \qquad \rank \pi_* \F(-4) = 8. \]
Thus, the expected dimension of the locus where \eqref{themap} fails to be surjective  is
\[\rank \pi_* \F(-5) \otimes \pi_*\O_{\P}(1) - \rank \pi_* \F(-4) + 1 = 5\]
(see \eqref{expco} below).
However, in our case, $D$ is codimension $4$. This is exactly the kind of setting where the excess Porteous formula is useful.

\subsection{The excess Porteous formula} \label{exp}
The usual Porteous formula tells us the fundamental class of the locus where a map of vector bundles drops rank when it occurs in the expected codimension. If $\sigma: A \to B$ is a map of vector bundles of ranks $a$ and $b$ on $X$, then the locus $D_k(\sigma)$ where $\sigma$ has rank $k$ or less has expected codimension $(a - k)(b - k)$. As a special case, the locus $D_{b-1}(\sigma)$, where $\sigma$ fails to be surjective has expected codimension
\begin{equation} \label{expco}
a - b + 1. \end{equation}
When $D_k(\sigma)$ occurs in the expected codimension, its fundamental class is given by a universal formula in terms of the Chern classes of $A$ and $B$ (see \cite[Section 14.4]{fulton}):
\begin{equation} \label{usualp}[D_k(\sigma)] = \Delta^{a-k}_{b-k}(c(B)/c(A)) \in A^{(a-k)(b-k)}(X). 
\end{equation}
Above, $c(B)/c(A)$ means we formally expand the ratio of total Chern classes; then given a class $c = 1 + c_1 + c_2 + \ldots$ where $c_i$ is the codimension $i$ component,
$\Delta^p_q(c)$ denotes the determinant of the $p \times p$ matrix $(c_{q + j - i})_{1 \leq i, j \leq p}$. For us, the precise formula is not so important. It is enough just to know that $\Delta^{a-k}_{b-k}(c(B)/c(A))$ is some polynomial in the Chern classes of $A$ and $B$.

Now we consider the case when $D = D_k(\sigma)$ does not necessarily have the expected codimension. Assume that $D_{k-1}(\sigma) = \varnothing$. 
Then, on $D_k(\sigma)$, there are kernel and cokernel bundles
\[0 \rightarrow K \rightarrow A|_D \to B|_D \to C \rightarrow 0.\]
We have $\rank K = a - k$ and $\rank C = b - k$.
Suppose further that $\iota: D \subset X$ is a local complete intersection of codimension $d$ with normal bundle $\N_{D/X}$. 
Let $p = (a - k)(b - k) - d$ be the difference of the expected and actual codimensions.
Then, by \cite[Example 14.4.7]{fulton}, we have
\begin{equation} \label{tada} 
\iota_* \left(\left[c(K^\vee \otimes C)/c(\N_{D/X})\right]_p\right) = \Delta^{a-k}_{b-k}(c(B)/c(A))
\end{equation}
where on the left, we formally expand the ratio of total Chern classes and then take the codimension $p$ piece.
This expression is called the \emph{excess Porteous formula}.
When $d = (a- k)(b-k)$ is the expected codimension, the left hand side is $\iota_* [1]$ which is just the fundamental class of $D$ in $X$. Hence, \eqref{tada} generalizes the usual Porteous formula \eqref{usualp}.

\subsection{Application to our situation}
We would like to apply the excess Porteous formula to $D = \beta^{-1}(\B_{11})$. In the notation of the previous subsection, we have $D = D_7(\mu)$, where $\mu$ is the multiplication map of \eqref{themap}. First we must check the following:
\begin{lem}
We have $D_6(\mu) = \varnothing$.
\end{lem}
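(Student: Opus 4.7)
My plan is to verify $D_6(\mu) = \varnothing$ pointwise by showing that at every $b \in \H^\circ$ the cokernel of the fiber map $\mu_b$ has dimension at most one. Since $\pi_*\F(-5)$ and $\pi_*\F(-4)$ are locally free on $\H^\circ$, cohomology and base change identifies $\mu_b$ with the multiplication-of-sections map
\[ H^0(\pp^1, \F_b(-5)) \otimes H^0(\pp^1, \O(1)) \to H^0(\pp^1, \F_b(-4)), \]
where $\F_b$ denotes the restriction of $\F$ to the fiber $\pi^{-1}(b) \cong \pp^1$.

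Next, I would invoke the splitting type enumeration from Section \ref{sts}. Since $b \in \H^\circ$ lies outside the preimage of $\M_{11}^3$, stratum $(1)$ is excluded, so the only way $\F_b$ can have a summand of degree $\leq 4$ is if $\F_b$ has splitting type $(4,10)$, corresponding to strata $(2)$ or $(3)$; in all other cases both summands of $\F_b$ have degree $\geq 5$. Writing $\F_b \cong \O(a_1) \oplus \O(a_2)$ with $a_1 + a_2 = 14$ and $\min(a_1, a_2) \geq 4$, the map $\mu_b$ decomposes as a direct sum of two standard multiplication maps
\[ m_i \colon H^0(\O(a_i - 5)) \otimes H^0(\O(1)) \to H^0(\O(a_i - 4)), \quad i = 1,2. \]

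When $a_i \geq 5$, the map $m_i$ is surjective (multiplication by sections of $\O(1)$ surjects onto $H^0(\O(n+1))$ for any $n \geq 0$, as $\O(1)$ is base-point-free), and contributes nothing to the cokernel. When $a_i = 4$ the source is zero while the target is one-dimensional, so $\coker m_i$ has dimension $1$. Since $a_1 + a_2 = 14$ with both $a_i \geq 4$, at most one $a_i$ can equal $4$; hence $\coker \mu_b$ has dimension at most $1$ and $\rank \mu_b \geq 7$, so $b \notin D_6(\mu)$. I do not anticipate a serious obstacle beyond carefully applying base change and appealing to the enumeration in Section \ref{sts}; the core of the argument is a short linear algebra check.
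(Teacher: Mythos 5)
Your proof is correct and takes essentially the same approach as the paper: both reduce via cohomology and base change to the fiberwise multiplication map determined by the splitting type of $\F$, and observe that at most one summand can have degree $4$, so the cokernel has rank at most $1$. The paper simply states the check for the single relevant splitting type $(4,10)$ on $D = D_7(\mu)$, while you spell out the same linear algebra uniformly over all of $\H^\circ$.
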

\begin{proof}
We know $\F$ has splitting type $(4, 10)$ on all fibers of $\P$ over points in $D$. By cohomology and base change, we are reduced to checking that the multiplication map
$H^0(\pp^1, \O(-1) \oplus \O(5)) \otimes H^0(\pp^1, \O(1)) \to H^0(\pp^1, \O \oplus \O(6))$
has cokernel of rank $1$.
\end{proof}

Now let $K$ and $C$ be the kernel and cokernel bundles on $D$, which sit in an exact sequence
\begin{equation} \label{kc} 0 \rightarrow K \rightarrow [\pi_* \F(-5)\otimes \pi_*\O_{\P}(1)]|_D \rightarrow [\pi_* \F(-5)]|_D \rightarrow C \rightarrow 0. \end{equation}
Their ranks are $\rank K = 5$ and $\rank C = 1$.
The expected codimension of $D$ is $5$, but its actual codimension is $4$. Thus, the excess Porteous formula \eqref{tada} tells us that
\begin{equation} \label{oursituation}\iota_*
\left(\left[c(K^\vee \otimes C)/c(\N_{D/\H^\circ})\right]_1\right) = \Delta^{5}_{1}\left(\frac{c(\pi_*\F(-4))}{c(\pi_*\F(-5) \otimes \pi_*\O_{\P}(1))}\right).
\end{equation}
Let 
\[t := c_1(K^\vee \otimes C) - c_1(\N_{D/\H^\circ}) \in A^1(D)\] 
so that the left-hand side of \eqref{oursituation} is $\iota_* t$.

\begin{lem} \label{itist}
We have $\iota_*t$ is tautological in $A^5(\H^\circ)$.
\end{lem}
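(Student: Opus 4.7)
The plan is to read the lemma off directly from the excess Porteous formula \eqref{oursituation}: it presents $\iota_*t$ as a universal polynomial in the Chern classes of $\pi_*\F(-4)$, $\pi_*\F(-5)$, and $\pi_*\O_{\P}(1)$, so the whole lemma reduces to showing that each of these Chern classes lies in $R^*(\H^\circ)$.

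To establish this, I would apply Grothendieck--Riemann--Roch twice. A first application of GRR to $\pi$ expresses
\[\ch(\pi_*\F(-j)) = \pi_*\bigl(\ch(\F)\cdot e^{-jh}\cdot\td(\omega_\pi^\vee)\bigr),\qquad h := c_1(\O_{\P}(1)),\]
and $\pi_*\O_\P(1) \cong \V$ is handled similarly. Then I would unwind $\ch(\F)$ via the Casnati--Ekedahl sequences $0\to\O_\P\to\alpha_*\O_\C\to\E^\vee\to 0$ and $0\to\F\to\Sym^2\E\to\alpha_*\omega_\alpha^{\otimes 2}\to 0$, which present it as a polynomial in $\ch(\alpha_*\O_\C)$ and $\ch(\alpha_*\omega_\alpha^{\otimes 2})$. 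A second application of GRR, this time to $\alpha$, rewrites each of these as an $\alpha_*$-pushforward of a polynomial in $c_1(\omega_\alpha)$. Substituting back and using $f = \pi\circ\alpha$, the projection formula, and $c_1(\omega_\alpha) = c_1(\omega_f) - \alpha^*c_1(\omega_\pi)$, each component of $\ch(\pi_*\F(-j))$ becomes a polynomial in the generators $f_*\bigl(c_1(\omega_f)^i\cdot\alpha^*c_1(\omega_\pi)^j\bigr)$. The Chern classes of $\V$ are treated in the same spirit: $c_1(\V) = 0$ by construction, while $c_2(\V)$ can be computed from the relation $c_1(\omega_\pi) = -2h$ on $\P$ as a rational multiple of the tautological class $f_*\bigl(\alpha^* c_1(\omega_\pi)^3\bigr)$.

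The computation is essentially mechanical, so I do not anticipate a serious obstacle. The one mild bookkeeping subtlety is that expanding $c_i(\F)$ in terms of $\ch_\bullet(\F)$ (or expressing $c_\bullet(\Sym^2\E)$ in terms of $c_\bullet(\E)$) produces products of $\alpha_*$-pushforwards on $\P$, which are not themselves in the image of $\alpha_*$. This is harmless after applying $\pi_*$: each resulting term, multiplied against its accompanying powers of $h$ and $c_1(\omega_\pi)$, can be reduced via the projection formula along $\alpha$ to a sum of classes of the form $f_*\bigl(\text{monomial in }\omega_f\text{ and }\alpha^*\omega_\pi\bigr)$, as in the analogous arguments establishing tautologicality of Chern classes of pushforwards of $\F$ and $\E$ used in \cite{789}.
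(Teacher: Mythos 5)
Your reduction is the same as the paper's: by the excess Porteous formula \eqref{oursituation}, it suffices to show the Chern classes of $\pi_*\F(-4)$, $\pi_*\F(-5)$, and $\pi_*\O_{\P}(1)$ are tautological. Where you diverge is in how you establish this. The paper observes that GRR along $\pi$ expresses the Chern classes of $\pi_*\F(-j)$ as polynomials in $c_2 = c_2(\pi_*\O_\P(1))$ and the coefficients $b_i, b_i'$ of the expansion $c_i(\F) = \pi^*b_i + \pi^*b_i'z$ --- these are Casnati--Ekedahl classes and are already known to be tautological by \cite[Theorem~3.10]{part1}, so the proof terminates in one sentence. You instead unwind the CE classes from scratch: a second application of GRR along $\alpha$, together with the structure sequences defining $\E$ and $\F$ and the identity $c_1(\omega_\alpha) = c_1(\omega_f) - \alpha^*c_1(\omega_\pi)$, expresses everything in terms of the generators $f_*\bigl(c_1(\omega_f)^i\,\alpha^*c_1(\omega_\pi)^j\bigr)$. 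This is in effect an inline reproof of \cite[Theorem~3.10]{part1}; it works, but forfeits the economy of citing the result.

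One small point of precision: after applying $\pi_*$, products of $\alpha_*$-pushforwards do \emph{not} ``reduce via the projection formula along $\alpha$ to a sum of classes of the form $f_*(\text{monomial})$'' --- a product of two such $f_*$-classes is generally not itself a single $f_*$-pushforward. What saves you is the projective bundle theorem for $\pi$: each factor $\alpha_*(\text{monomial in }c_1(\omega_\alpha))$ can be written as $\pi^*a + \pi^*b\,z$ with $a,b$ of the form $f_*(\ldots)$, products of such are again of this form using $z^2 = -\pi^*c_2$, and then $\pi_*$ yields a \emph{polynomial} in the generators $f_*\bigl(c_1(\omega_f)^i\,\alpha^*c_1(\omega_\pi)^j\bigr)$ and $c_2$. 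Since $R^*(\H^\circ)$ is a subring, this is still tautological, so the conclusion stands; the mechanism is just the ring structure and the projective bundle theorem rather than the projection formula alone.
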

\begin{proof}
By \eqref{oursituation}, it will suffice to show that the Chern classes of $\pi_*\F(-4), \pi_*\F(-5)$ and $\pi_* \O_{\P}(1)$ are tautological. The bundle $\pi_* \O_{\P}(1)$ is the universal rank $2$ bundle pulled back from $\BSL_2$, so its first Chern class vanishes. Let $c_2 = c_2(\pi_* \O_{\P}(1))$ and
let $z = c_1(\O_{\P}(1))$.
By the projective bundle theorem, $z^2 = -\pi^* c_2$ and we can write
 $c_i(\F) = \pi^* b_i + \pi^*b_i'z$ for $b_i \in A^i(\H^\circ)$ and $b_i' \in A^{i-1}(\H^\circ)$.
 The classes $b_i, b_i'$ and $c_2$ are examples of the \emph{Casnati--Ekedahl classes} defined in \cite[Definition 3.9]{part1}. These classes were shown to be tautological in \cite[Theorem 3.10]{part1}. Finally, the Grothendieck--Riemann--Roch formula tells us how to express the Chern classes of $\pi_*\F(-4)$ and $\pi_*\F(-5)$ as polynomials in $b_i, b_i'$ and $c_2$ in $A^*(\H^\circ)$. Hence, they are tautological.
\end{proof}

Combining Lemma \ref{itist} with
Theorem \ref{p2th} now gives:

\begin{cor} \label{goodone}
We have $\beta'_* \iota_* t$ is is tautological in $\M_{11} \smallsetminus \M_{11}^3$.
\end{cor}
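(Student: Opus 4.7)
The plan is to observe that this corollary is an immediate consequence of the two results just established. By Lemma \ref{itist}, the class $\iota_* t$ lives in the tautological subring $R^5(\H^\circ) \subseteq A^5(\H^\circ)$, where $\H^\circ = \H_{4,11} \smallsetminus \beta^{-1}(\M_{11}^3)$. Meanwhile, Theorem \ref{p2th} (quoted from \cite{part2}) tells us that the push forward map
\[
\beta'_*: A^*(\H_{4,11} \smallsetminus \beta^{-1}(\M_{11}^3)) \to A^*(\M_{11} \smallsetminus \M_{11}^3)
\]
carries the tautological subring on the source into the tautological subring on the target. Applying $\beta'_*$ to $\iota_* t$ therefore produces a tautological class in $A^*(\M_{11} \smallsetminus \M_{11}^3)$.

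The only thing to verify is that $\beta'_*$ is actually defined on $\iota_* t$, which requires $\beta'$ to be proper (so that the push forward exists on Chow groups). This is exactly the content of the discussion in Section \ref{overview}: removing the preimage of $\M_{11}^3$ from the source and $\M_{11}^3$ from the target eliminates the locus where the degree $4$ map could acquire a base point, making $\beta'$ proper. Since $\iota_* t$ is supported on $D = \beta^{-1}(\B_{11}) \subset \H^\circ$, no further properness issues arise.

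The main (and only) potential obstacle is a bookkeeping one: ensuring that the codimension accounting matches, namely that $\iota_* t \in A^5(\H^\circ)$ pushes forward to a class in $A^5(\M_{11} \smallsetminus \M_{11}^3)$ (which it does, since $\beta'$ has relative dimension $0$ generically on $D$, but more importantly because push forward of Chow classes along a proper map preserves dimension). In any case, the output class being tautological does not depend on its codimension. So the proof reduces to one or two lines: apply Theorem \ref{p2th} to the tautological class $\iota_* t$ from Lemma \ref{itist}.
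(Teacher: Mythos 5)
Your proof is correct and matches the paper exactly: the corollary is simply Lemma \ref{itist} (tautologicality of $\iota_* t$ in $A^*(\H^\circ)$) combined with Theorem \ref{p2th} (tautological classes push forward to tautological classes along the proper map $\beta'$). The paper gives no further argument. One small inaccuracy in your parenthetical bookkeeping remark: $\iota_* t \in A^5(\H^\circ)$ does not push forward to $A^5(\M_{11} \smallsetminus \M_{11}^3)$; proper push forward preserves \emph{dimension}, not codimension, and since $\dim \H_{4,11} = 25$ while $\dim \M_{11} = 30$, the image lands in $A^{10}(\M_{11} \smallsetminus \M_{11}^3)$ (consistent with $\codim \B_{11} = g - 1 = 10$). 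As you yourself note, this does not affect the conclusion that the pushed-forward class is tautological.
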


Our next goal is to 
show that $t$ maps with non-zero degree onto $\B_{11}$ so that $\beta'_*\iota_* t$ is a \emph{non-zero} multiple of $[\B_{11}]$.

\section{The degree of $t$ on fibers} \label{degcalc}
In this section, we calculate the degree of $t$ along a fixed fiber of $D = \beta^{-1}(\B_{11}) \to \B_{11}$. Each fiber $Z$ is an elliptic curve, and we must determine the degrees of the bundles $K, C$ and $\N_{D/\H^\circ}$ restricted to $Z \subset D$.

\subsection{Notation}
Let $C_0$ be a fixed genus $11$ bielliptic curve and let $\varphi: C_0 \to E$ be the unique degree $2$ map to an elliptic curve. Set $S := \varphi_* \O_C$, which is a rank $2$ vector bundle on $E$. We have
\[\deg(S) = \chi(E, S) = \chi(C_0, \O_{C_0}) = 1 - g(C_0) = -10.\]
In addition, let $Z := \Pic^2 E$ so that $Z$ is the fiber of $D = \beta^{-1}(\B_{11}) \to \B_{11}$ over $[C_0] \in \B_{11}$.

We write $\nu: E \times Z \to \Pic^2 E = Z$ for the (vertical) projection map, and $p: E \times Z \to E$ for the (horizontal) projection map.
Choose an isomorphism $Z \cong E$ and let $\Delta$ be the class of the diagonal on $E \times Z$. 
Let $v = \nu^* \mathrm{pt}$ be the class of a vertical fiber, and $h = p^* \mathrm{pt}$ be the class of a horizontal fiber.
\begin{center}
\includegraphics[width=3in]{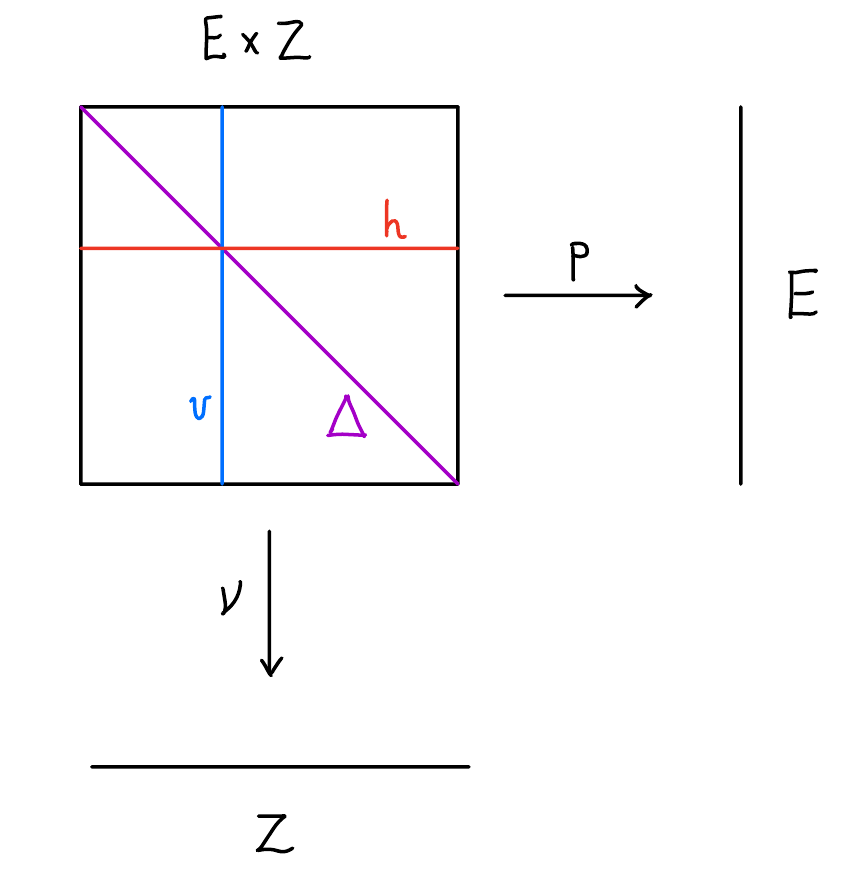}
\end{center}
The intersection numbers are
\[h^2 = v^2 = \Delta^2 = 0 \qquad h\cdot \Delta = h \cdot v = v \cdot \Delta = 1.\]
To prove that $\Delta^2 = 0$, we use adjunction for $\Delta$ on the surface $E \times Z$. Since $K_E = K_Z = \O$, we have $K_{E \times Z} = \O$. Therefore
\[0 = 2g(\Delta) - 2 = (K_{E \times Z} + \Delta) \cdot \Delta = K_{E \times Z} \cdot \Delta + \Delta^2 = \Delta^2.\]

Next, let $\L := \O_{E \times Z}(\Delta + h + sv)$ be a Poincar\'e line bundle on $E \times Z$. Our final degree calculation should not depend on the choice of $s$.
Define $V := \nu_* \L$, which is a rank $2$ vector bundle on $Z$.
The natural map 
\[\nu^* V = \nu^*\nu_* \L \to \L\]
on $E \times Z$
induces a map $\gamma: E \times Z \to \pp V^\vee =: \pp$ such that $\gamma^* \O_{\pp V^\vee}(1) \cong \L$. We think of $\gamma$ as the ``universal map of $E$ to a $\pp^1$." 

Our family of degree $4$ covers over $Z$ is $C_0 \times Z \xrightarrow{\varphi \times 1} E \times Z \xrightarrow{\gamma} \pp V^\vee.$  This induces a map $Z \to \H_{4,11}$ so that the outer triangle below is the base change of the universal diagram \eqref{ud} along $Z \to \H_{4,11}$.
\begin{equation} \label{thefam}
\begin{tikzcd}
C_0\times Z \arrow[r, "\varphi\times 1"] \arrow[rd, "f_0"'] \arrow[rr, "\alpha_0", bend left=49] & E\times Z \arrow[d, "\nu"] \arrow[r, "\gamma"] & \pp V^{\vee} \arrow[ld, "\pi_0"] \\
& Z & 
\end{tikzcd}
\end{equation}

By Grothendieck--Riemann--Roch (along a projection with elliptic fibers so $\td_{\nu} = 1$), we have
\[c_1(V) = c_1(\nu_*\L) = \nu_* \ch_2(\L) = \frac{1}{2} c_1(\L)^2 = \frac{1}{2}(\Delta + h + sv)^2 = \frac{1}{2}(2 + 4s) = 1 + 2s.\]
Above, and in what follows, we often omit writing $\deg$ in front of a first Chern class of a vector bundle on a curve, so
$c_1(V) = 1 + 2s$ is understood to mean $\deg c_1(V) = 1 + 2s$.
The rank $2$ bundle $V$ on $Z$ is the restriction of the universal bundle $\V$ from $\H_{4,11}$. Recall that we are assuming in our moduli problem for $\H_{4,g}$ that $c_2(\V)=0$ (see \eqref{ud} and the following discussion). This assumption corresponds to choosing $s = -\frac{1}{2}$. (We could just set $s = -\frac{1}{2}$ now, but we will keep $s$ around to confirm that it cancels out of the final answer.)

On $\pp := \pp V^\vee$, we have a tautological sequence
\begin{equation} \label{tauts} 0 \rightarrow \O_{\pp}(-1) \rightarrow V^\vee \rightarrow V' \rightarrow 0, 
\end{equation}
wherein we see $c_1(V') = c_1(\O_{\pp}(1)) - c_1(V)$. Hence, the relative tangent bundle of $\pp$ over $Z$ has
\[c_1(T_{\pp/Z}) = c_1(\O_{\pp}(1)) + c_1(V') = 2c_1(\O_{\pp}(1)) - c_1(V).\]
In particular, if $c_1(\L) = \Delta + h + sv$, then we see
\begin{equation} \label{cg} c_1(\gamma^*T_{\pp/Z}) = 2c_1(\L) - \nu^*c_1(V) = 2(\Delta + h + sv) - (1 + 2s)v = 2\Delta + 2h - v,\end{equation}
which is independent of $s$, as it must be.

\subsection{The normal bundle to $D \subset \H^\circ$.}
To find the degree of $\N_{D/\H^\circ}$ along $Z$ first note that we have an exact sequence
\[0 \rightarrow \N_{Z/D} \rightarrow \N_{Z/\H^\circ} \rightarrow \N_{D/\H^\circ}|_Z \rightarrow 0. \]
Because $Z$ is a fiber, we have $\deg(\N_{Z/D}) = 0$. Hence, $\deg(\N_{D/\H^\circ}|_Z) = \deg(\N_{Z/\H^\circ})$.
Meanwhile, from the sequence
\[0 \rightarrow \T_Z \rightarrow \T_{\H^\circ}|_Z \rightarrow \N_{Z/\H^\circ} \rightarrow 0,\]
we see $\deg(\N_{Z/\H^\circ}) = \deg(\T_{\H}|_Z)$. Hence, \[\deg(\N_{D/\H^\circ}|_Z) = \deg(\T_{\H}|_Z), \]
where we use that because $Z \subset \H^\circ$, $\T_{\H}|_Z=\T_{\H^\circ}|_{Z}$.

\subsubsection{Tangent space to $\H$} \label{tsh}
On the universal curve $\C$ over $\H = \H_{4,g}$ as in \eqref{ud}, there is a short exact sequence
\[0 \rightarrow \T_{\C/\H} \rightarrow \alpha^* \T_{\P/\H} \rightarrow \mathcal{R} \rightarrow 0,\]
where $\mathcal{R}$ is supported on the universal ramification divisor. First push forward by $\alpha$, which is exact because $\alpha$ is finite. Then push forward by $\pi$ to get the long exact sequence on $\H$:
\begin{equation} \label{les}0 \rightarrow f_* \T_{\C/\H} \rightarrow \pi_*((\alpha_* \O_\C)\otimes \T_{\P/\H}) \rightarrow f_* \mathcal{R} \rightarrow R^1f_* \T_{\C/\H} \rightarrow R^1\pi_*((\alpha_*\O_C) \otimes \T_{\P/\H}) \rightarrow 0. 
\end{equation}
Since the degree of the tangent bundle on a genus $g \geq 2$ curve is $2 - 2g < 0$, the first term above is zero. We also identify $R^1f_* \T_{\C/\H}$ as the pullback to $\H$ of the tangent bundle of $\M_g$. To see this, note that the tangent bundle to $\M_g$ is $(p_*(\Omega_p^{\otimes 2})^{\vee}$, where $p$ is the universal curve over $\M_g$, and $\Omega_p$ is the relative cotangent bundle by \cite[p.344]{ACG}. By Serre duality, we can identify this bundle with $R^1p_*(\T_p)$. Because $\C\rightarrow \H$ is a family of curves, it is pulled back from the universal curve over $\Mg$, so $R^1f_* \T_{\C/\H}=\beta^*R^1p_*(\T_p)=\beta^* \T_{\Mg}$.

To help understand the other terms, consider the exact sequence
\[0 \rightarrow \O_{\P} \rightarrow \alpha_* \O_{\C} \rightarrow \E^\vee \rightarrow 0.\]
on $\P$.
Tensoring with $\T_{\P/\H}$ and pushing forward by $\pi$ gives
\begin{equation} \label{f2} 0 \rightarrow \pi_* \T_{\P/\H} \rightarrow \pi_*((\alpha_*\O_C) \otimes \T_{\P/\H}) \rightarrow \pi_*(\E^\vee \otimes \T_{\P/\H}) \rightarrow 0. 
\end{equation}
The subbundle $\pi_*\T_{\P/\H} \subset \pi_*((\alpha_*\O_C) \otimes \T_{\P/\H})$
corresponds to the first order deformations of maps $C \to \pp^1$ induced by automorphisms the target $\pp^1$. The tangent space to the Hurwitz space is first order deformations of the map modulo those induced by such automorphisms. Quotienting this out of the second and third nonzero terms of \eqref{les}, we obtain the exact sequence
\begin{equation} \label{tseq} 0 \rightarrow \pi_*(\E^\vee \otimes \T_{\P/\H}) \rightarrow \T_{\H} \rightarrow \beta^* \T_{\M_g} \rightarrow R^1\pi_*((\alpha_*\O_C) \otimes \T_{\P/\H}) \rightarrow 0.
\end{equation}
The first term above tells us how to find the vertical tangent bundle along a fiber in terms of the bundle $\E$.

Consider our family $\alpha_0: C_0 \times Z \to \pp$ as in \eqref{thefam}. Define 
\[\E_0 = \coker(\O_{\pp} \to (\alpha_0)_* \O_{C_0 \times Z})^\vee,\]
so $\E_0$ is the restriction of the universal $\E$ on $\P$ to $\pi^{-1}(Z)$. 
Recall that on the fibers of $\pp \to Z$, our $\E_0$ always has a degree $2$ summand (since $Z \subset D$, and the latter is the union of the strata (2) and (3) from Section \ref{sts}). This degree $2$ summand gives rise to a distinguished quotient
\[\E_0 \to (\pi_0^*L)(2) \]
where $L$ is a line bundle on $Z$. In fact, twisting, taking duals and pushing forward, we have that $L = (\pi_{0*}(\E^\vee_0 \otimes \O_{\pp}(2)))^\vee$. Here, $L$ is what we called an HN bundle for $\E$ restricted to some stratum in \cite[Section 3]{789}.
By the previous paragraph, the tangent bundle to the vertical fiber $Z$ is \[\pi_{0*}(\E_0^\vee \otimes T_{\pp/Z}) = \det V^\vee \otimes \pi_{0*}(\E^\vee_0 \otimes \O_{\pp}(2)) = \det V^\vee \otimes L^\vee.\]
On the other hand, $Z$ is an elliptic curve, so its tangent bundle is trivial. Hence, we learn
\begin{equation} \label{leq}c_1(L) = -c_1(V) = -(1+2s),\end{equation}
which will be useful later.

We want to find the degree of $\T_\H|_{Z}$. We have $\deg(\beta^*\T_{\M_g}|_Z) = 0$ because $Z$ is a fiber of $\beta$. 
As just discussed, $\pi_{0*}(\E_0^\vee \otimes \T_{\pp/Z})$ has degree $0$.
Hence, by \eqref{tseq}, we have
\begin{align} \deg \T_{\H}|_Z &=  - \deg(R^1 \pi_{0*}((\alpha_{0*} \O_{C_0 \times Z}) \otimes T_{\pp/Z})) \notag
\intertext{Next, we claim that $\deg(\pi_{0*}(\alpha_{0*}\O_{C_0 \times Z} \otimes \T_{\pp/Z})) $ is also $0$. As just discussed, the right-hand term of the filtration \eqref{f2} has degree $0$ since it is the tangent bundle to $Z$. The left-hand term of the filtration is $\pi_{0*}\T_{\pp/Z} = \pi_*(\O_{\pp}(2) \otimes \det V^\vee) = \Sym^2 V \otimes \det V^\vee$,
which also seen to be degree $0$ by the splitting principle. Hence, the previous line equals}
&= \deg(\pi_{0*}((\alpha_{0*} \O_{C_0 \times Z}) \otimes T_{\pp/Z})) - \deg(R^1 \pi_{0*}((\alpha_{0*} \O_{C_0 \times Z}) \otimes T_{\pp/Z})). \notag \\
\intertext{Note that $\alpha_{0*}\O_{\C_0 \times Z} = \gamma_*(\varphi \times 1)_* \O_{C_0 \times Z} = \gamma_*p^*(\varphi_*\O_{C_0}) = \gamma_*p^*S$, so the above becomes}
&= \deg(\pi_{0*} (\gamma_*p^*S \otimes T_{\pp/Z})) - \deg(R^1\pi_{0*}(\gamma_* (p^*S \otimes  T_{\pp/Z}))),  \notag \\
\intertext{or using the push-pull formula:}
&= \deg(\pi_{0*} (\gamma_*p^*S \otimes \gamma^*T_{\pp/Z})) - \deg(R^1\pi_{0*}(\gamma_* (p^*S \otimes \gamma^* T_{\pp/Z}))). \notag \\
\intertext{Because $\gamma$ is finite, $\gamma_*$ is exact, so $R^1\pi_{0*}\gamma_* = R^1(\pi_0 \circ \gamma)_* = R^1\nu_*$, and we get:}
&= \deg(\nu_*(p^*S \otimes \gamma^* T_{\pp/Z}) - \deg(R^1\nu_* (p^*S \otimes \gamma^* T_{\pp/Z})). \notag \\
\intertext{Finally, using Grothendieck--Riemann--Roch, we have}
&= [\nu_*(\ch(p^*S \otimes \gamma^* T_{\pp}) \cdot \mathrm{td}_\nu)]_1. \label{ha}
\end{align}
Since $\nu: E \times Z \to Z$ has trivial relative tangent bundle, we have $\mathrm{td}_{\nu} = 1$.
Next, $p^*S$ is a vector bundle pulled back from $E$, so 
\begin{equation} \label{chps} \ch(p^*S) = p^*\ch(S) = p^*(\rank(S) + c_1(S)) = p^*(\rank(S) + \deg(S) \cdot [\mathrm{pt}]) = 2 - 10h.
\end{equation}
Additionally, using \eqref{cg}, we have
\[\ch(\gamma^* T_{\pp/Z}) = 1 + (2\Delta + 2h - v) + \tfrac{1}{2}(2\Delta + 2h - v)^2.\]
Putting these together, we find
\[\ch(p^*S \otimes \gamma^* T_{\pp}) = \ch(p^*S) \cdot \ch(\gamma^* T_{\pp}) = (2 -10h)\cdot (1 + (2\Delta + 2h - v) + \tfrac{1}{2}(2\Delta + 2h - v)^2).\]
We want the degree $2$ piece (which pushes forward to the degree $1$ piece needed in \eqref{ha}):
\begin{align*}
\deg \ch_2(S \otimes \gamma^* T_{\pp}) &=-10h \cdot (2\Delta + 2h - v) + (2\Delta + 2h - v)^2 \\
&= -10(2 + 0 - 1) + (8 - 4 - 4) \\
&= -10.
\end{align*}
Thus, we have found one of our desired terms
\begin{equation} \label{nterm}\deg(\N_{D/\H}|_Z) = \deg(\T_{\H}|_Z) = -10.
\end{equation}


\subsection{The kernel and cokernel bundles}
Recall that $\F$ splits as $\O(4) \oplus \O(10)$ on each fiber of $\pi$ over $Z \subset D$.  Let us write $\F_0$ for $\F|_{\pi^{-1}(Z)} = \F|_{\pp}$.
On $Z$, there are line bundles $M$ and $N$ such that $\F_0$ admits a filtration
\begin{equation} \label{filt} 0 \rightarrow (\pi_0^*N)(10) \rightarrow \F_{0} \rightarrow (\pi_0^*M)(4) \rightarrow 0,\end{equation}
which restricts to the Harder--Narasimhan filtration on each fiber of $\pp \to Z$. These are what we called the HN bundles for $\F$ in \cite[Section 3]{789}.

Twisting \eqref{filt} by $\O_{\pp}(-5)$ and pushing forward, we see that $\pi_{0*} \F_0(-5) = N \otimes \pi_{0*}\O_{\pp}(5)$.
Similarly, twisting by $\O_{\pp}(-4)$ and pushing forward, we get the filtration of $\pi_{0*}\F_0(-4)$ given in the right vertical column the diagram below. 
\begin{equation} \label{md}
\begin{tikzcd}
& & 0 \arrow{d} & 0 \arrow{d} \\
0 \arrow{r} & N \otimes \det V \otimes \pi_{0*}\O_{\pp}(4) \arrow{r} &
N \otimes \pi_{0*}\O_{\pp}(5) \otimes \pi_{0*}\O_{\pp}(1) \arrow{r} \arrow{d} & N \otimes \O_{\pp}(6) \arrow{d} \arrow{r} & 0 \\
& & \pi_{0*} \F_0(-5) \otimes \pi_{0*}\O_{\pp}(1) \arrow{r}{\mu|_Z} \arrow{d} & \pi_{0*} \F_0(-4) \arrow{d}  \\
&  & 0 &M \arrow{d} \\
& & & 0
\end{tikzcd}
\end{equation}
The top row above comes from tensoring $N$ with the usual multiplication map, which in turn is obtained by taking the dual of \eqref{tauts}, noting that $V'^\vee = \det V \otimes \O_{\pp}(-1)$, tensoring with $\O_{\pp}(5)$ and pushing forward.
From \eqref{md}, it is clear that the kernel bundle of $\mu|_Z$ is $K|_Z = N \otimes \det V \otimes \pi_{0*}\O_{\pp}(4)$ and the cokernel is $C|_Z = M$. Note that $\pi_{0*}\O_{\pp}(4) = \Sym^4 V$. Thus, by the splitting principle, we find 
\[ c_1(K|_Z) = \rank (\Sym^4 V)(c_1(N) + c_1(V)) + c_1(\Sym^4 V) = 5c_1(N) + 15c_1(V).\]
Using the splitting principle again, we have
\begin{align} \label{ckterm}
c_1(K|_Z^\vee \otimes C|_Z) &= (\rank K|_Z^\vee)c_1(C|_Z) + (\rank C|_Z) c_1(K|_Z^\vee) \notag \\
&= 5c_1(C|_Z) - c_1(K|_Z)  \\
&= 5c_1(M) - 5c_1(N) - 15c_1(V). \notag
\end{align}

\subsubsection{The degree of the HN bundle $M$}
The line bundle $M$ is closely related to the line bundle $L$ defined in Section \ref{tsh}. 
Just as in the proof of \cite[Lemma 4.3(1)]{789},
certain geometric considerations give rise to a non-vanishing section of $M^\vee \cong L^{\otimes 2}$, as we now explain. As in \cite[Section 3.1]{789}, our family of degree $4$ covers over $Z$ determines a global section of $\F_0^\vee \otimes \Sym^2 \E_0$, which tells us the pencil of quadrics that cut out our curve inside $\pp \E_0^\vee$.

The canonical quotient $\E_0 \to (\pi_0^*L)(2)$ induces a quotient $\F_0^\vee \otimes \Sym^2 \E_0 \to \F_0^\vee \otimes (\pi_0^*L^{\otimes 2})(4)$. Geometrically, this quotient corresponds to restricting our quadrics to the distinguished section $\pp((\pi_0^*L)(2)^\vee) \subset \pp \E_0^\vee$. Because
our source curves are irreducible, the quadrics cannot vanish identically on the distinguished section in any fiber over $Z$.
Tensoring the dual of \eqref{filt} with $(\pi_0^*L^{\otimes 2})(4)$ and applying $\pi_{0*}$, we see that this gives rise to a non-vanishing section of $M^\vee \otimes L^{\otimes 2}$.
Thus, $M \cong L^{\otimes 2}$, so using \eqref{leq}, we have
\begin{equation} \label{mdeg} c_1(M) = 2c_1(L) = -2(1+2s).
\end{equation}

\subsubsection{The degree of the HN bundle $N$}
Taking the determinant of \eqref{filt}, we have
\[N \cong M^\vee \otimes \pi_{0*}\det \F_0(-14) \cong M^\vee \otimes \pi_{0*}\det \E_0(-14),\]
where the second isomorphism above comes from the canonical isomorphism $\det \F_0 \cong \det \E_0$ in the structure theorems for degree $4$ covers (see \cite{CE} or \cite[Section 3.2]{part1}).

Let us write $z := c_1(\O_{\pp}(1))$.
On $\pp = \pp V^\vee$, the projective bundle theorem gives $z^2 = \pi_0^*c_1(V) \cdot z \in A^2(\pp)$  (note that $c_2(V) = 0$ since $Z$ is a curve).
Let $\overline{v} = \pi_0^*[\mathrm{pt}]$. By \eqref{thefam}, we have $\gamma^*\overline{v} = v$.
On $\pp$, we have $\deg(\overline{v} \cdot z) = 1$, so
\[\deg(z^2) = \deg(\pi_0^*c_1(V) \cdot z) = (1+2s)\deg(\overline{v} \cdot z) = 1 + 2s.\]

Next, we want to find $c_1(\E_0) = -c_1(\gamma_* p^*S)$. For this, we use Grothendieck--Riemann--Roch for $\gamma$. The first ingredient is to compute the relative Todd class.  
We know the tangent bundle of $E \times Z$ is trivial, so the relative Todd class is
\[\td_{\gamma} = \frac{\td_{E \times Z}}{\gamma^* \td_{\pp}} = \frac{1}{\gamma^* \td_{\pp}} = 1 - \frac{1}{2}\gamma^*c_1(T_{\pp}) + \frac{1}{6}\gamma^*c_1(T_{\pp})^2. \]
By \eqref{chps}, we have $\ch(p^*S) = 2 - 10h$, so Grothendieck--Riemann--Roch gives
\begin{align*}
\ch(\gamma_*p^*S) &= \gamma_*\left[\ch(p^*S) \cdot \mathrm{td}_\gamma\right] \\
&= \gamma_*\left[(2 -10h) \cdot \left(1 - \frac{1}{2}\gamma^*c_1(T_{\pp}) + \frac{1}{6} \gamma^*c_1(T_{\pp})^2\right) \right] \\
&= \gamma_*\left[ 2 - \gamma^*c_1(T_{\pp}) -10h + \ldots\right]. \\
\intertext{Since $\gamma$ has degree $2$, we have $\gamma_*[E \times Z] = 2[\pp]$ and the above becomes}
&=4[\pp]  -\gamma_*\gamma^*c_1(T_{\pp}) -10 (\gamma_*h) \\
&= 4[\pp] - 2c_1(T_{\pp}) - 10 (\gamma_* h).
\end{align*}
We have
\[c_1(T_{\pp}) = c_1(T_Z) + c_1(T_{\pp/Z})= 0 + 2z - \pi_0^*c_1(V) = 2z - (1+2s)\overline{v}.\]

Next, we need to find $\gamma_*h$. This class has the form $\gamma_* h = az + b \overline{v}$. To find the coefficients $a, b$, intersect with $z$ and $\overline{v}$ and use the push-pull formula:
\[a = (\gamma_*h)\cdot \overline{v} = h \cdot \gamma^*\overline{v} = h \cdot v = 1\]
and
\[(1+2s)a + b = (\gamma_*h)\cdot z = h \cdot \gamma^*z = h \cdot (\Delta + h + sv) = 1 + s,\]
from which we see $b = -s$. Putting this together, we have
\begin{align*}
c_1(\gamma_*S) = -2c_1(T_{\pp}) - 10(\gamma_*h) = -2\cdot (2z - (1+2s)\overline{v}) -10(z - s \overline{v}) = -14z + (2 + 14s) \overline{v}.
\end{align*}
It follows that
\[c_1(\det \E_0(-14)) = c_1(\E_0) -14z = -c_1(S) - 14z = -(2+14s)\overline{v}.
\]
Finally,
\begin{equation} \label{ndeg} c_1(N) = -c_1(M) + c_1(\pi_{0*}\det \E_0(-14)) = 
2(1+2s) -(2+14s) = -10s.
\end{equation}

\subsection{Combining all the terms} \label{last}
We now combine \eqref{nterm} and \eqref{ckterm} to
see that the degree of $t$ along the fiber $Z$ is
\begin{align*}
\deg(t \cdot Z) &= c_1(K|_Z^\vee \otimes C|_Z) - c_1(\N_{D/\H}|_Z) \\
&= 5c_1(M) - 5c_1(N) - 15c_1(V) - (-10).\\
\intertext{Substituting in \eqref{mdeg} and \eqref{ndeg}, this becomes}
&= 5\cdot (-2(1 + 2s)) - 5 \cdot (-10s) - 15 \cdot (1 + 2s) - (-10) \\
&=-10 -20s + 50s -15 -30s + 10 \\
&= -15.
\end{align*}
Notice that the $s$ terms cancel as promised. Most importantly, we see $\deg(t \cdot Z) \neq 0$.

\section{Proof of Theorem \ref{main}} \label{fs}
By our calculation in Section \ref{last}, we have
\[ \beta'_* \iota_* t = \deg (t \cdot Z) [\B_{11}] = -15[\B_{11}] \in A^{10}(\M_{11} \smallsetminus \M_{11}^3). \]
The left-hand side is tautological by Corollary \ref{goodone}. Hence, $[\B_{11}] \in A^{10}(\M_{11} \smallsetminus \M_{11}^3)$ is tautological. Moreover, all classes supported on $\M_{11}^3$ are tautological by \cite[Section 1.1(1)]{789}, so $[\B_{11}]$ is tautological in $A^{10}(\M_{11})$.

To see that all classes on $\B_{11}$ are tautological, let $\Sigma$ be the stratum (3) of special bielliptics from Section \ref{sts}. From Casnati--Del Centina's characterization of special bielliptics \cite{CDC}, we see that $\Sigma \to \B_{11}$ is finite and surjective. Hence, the push forward map $a_*: A^*(\Sigma) \to A^*(\M_{11})$ surjects onto the group of cycles supported on $\B_{11}$.
The push forward of the fundamental class $a_*[\Sigma]$ is a multiple of $[\B_{11}]$, which we have shown is tautological. By \cite[Lemma 4.3(2)]{789}, we have $A^*(\Sigma)$ is generated by $a^*\kappa_1$ and $a^*\kappa_2$. Using the push-pull formula, it follows that all classes supported on $\B_{11}$ are tautological. \qed

\bibliographystyle{amsplain}
\bibliography{refs}
\end{document}